 \newtheorem{thm}{Theorem}
 \newtheorem{lem}[thm]{Lemma}
 \theoremstyle{definition}
 \newtheorem{defn}[thm]{Definition}
 \newtheorem*{theo}{Theorem}
 \theoremstyle{remark}
\newcommand{\ccomma}{\mathpunct{\raisebox{0.5ex}{,}}}
\DeclareMathOperator{\re}{Re}
\begin{document}

%
%
%
%
%

\title[On measures which generate the scalar product]{On measures which generate the scalar\\ product in a space of rational functions.}
\author[V. Katsnelson]{Victor Katsnelson}
\address{%
Department of Mathematics\\
The Weizmann Institute\\
76100, Rehovot\\
Israel}
\email{victor.katsnelson@weizmann.ac.il; victorkatsnelson@gmail.com}
\subjclass{46E20}
\keywords{Non-negative measures on the unite circle; rational fractions
with poles in the unit disc.}
\date{February, 2016}
\begin{abstract}
Let \(z_1,z_2,\,\ldots\,,z_n\) be pairwise different points of the unit
disc and \(\mathscr{L}(z_1,z_2,\,\ldots\,z_n)\) be the linear space generated by the rational fractions
\(\frac{1}{t-z_1}\ccomma\frac{1}{t-z_2}\ccomma\,\cdots\,\ccomma
\frac{1}{t-z_n}\cdot\) Every non-negative measure \(\sigma\) on the
unit circle \(\mathbb{T}\) generates the scalar product
\[\langle\,f\,,\,g\,\rangle_{\!_{L^2_\sigma}}
=\int\limits_{\mathbb{T}}f(t)\,\overline{g(t)}\,\sigma(dt),
\quad \forall\,f,g\,\in\,L^2_\sigma.\]
The measures \(\sigma\) are described which satisfy the condition
\[\langle\,f\,,\,g\,\rangle_{\!_{L^2_\sigma}}=
\langle\,f\,,\,g\,\rangle_{\!_{L^2_m}},\quad \forall\,f,g\in\mathscr{L}(z_1,z_2,\,\ldots\,z_n),\]
where \(m\) is the normalized Lebesgue measure on \(\mathbb{T}\).
\end{abstract}
\maketitle
\begin{itemize}
\item  \(\mathbb{R}\) stands for the set of all real numbers.
\item  \(\mathbb{C}\) stands for the set of all complex numbers.
\item  \(\mathbb{T}\) is the unite circle:
\(\mathbb{T}= \{t\in\mathbb{C}:\,|t|=1\}.\)
\item \(\mathbb{D}_+\) is the open unite disc: \(\mathbb{D}_+= \{z\in\mathbb{C}:\,|z|<1\}.\)
\item \(\mathbb{D}_-\) is the exterior of the unite circle: \(\mathbb{D}_-= \{z\in\mathbb{C}:\,1<|z|\leq \infty\}.\)
\end{itemize}

 \vspace{3.0ex}
The main objects of this note are spaces of functions on the unit circle \(\mathbb{T}\) and measures generated scalar products in such spaces.
Let us set some notation.

 For
 a non-negative measure \(\sigma\) on \(\mathbb{T}\),
the space \(L^2_\sigma\) is the set of all square integrable functions with respect to the measure \(\sigma\). The space \(L^2_\sigma\) is provided by natural linear operations and by the scalar product
\begin{equation*}
\langle\,f\,,\,g\,\rangle_{\!_{L^2_\sigma}}=\int\limits_{\mathbb{T}}f(t)\,\overline{g(t)}\,\sigma(dt).
\quad \forall\,f,g\,\in\,L^2_\sigma.
\end{equation*}
Let \(z_1,z_2,\,\ldots\,,\,z_n,\,n<\infty,\) be a sequence of pairwise different points of the unit disc \(\mathbb{D_+}\):
\begin{subequations}
\label{seq}
\begin{gather}
z_k\in\mathbb{D_+},\quad 1\leq k \leq n,\label{seq_a} \\
z_p\not=z_q, \quad 1\leq p,q\leq n.\label{seq_b}
\end{gather}
\end{subequations}
We relate  two objects to this sequence. The first object is the Blaschke product%
\footnote{If \(z_k=0\) for some \(k\), then the appropriate factor of the Blaschke product \eqref{BP} is equal just to \(t\), but not to
\(\frac{z_k-t}{1-\overline{z_k}t}\cdot\frac{|z_k|}{z_k}\cdot\).}
\begin{equation}
\label{BP}
B(t)=\prod\limits_{1\leq k\leq n}\frac{z_k-t}{1-\overline{z_k}t}\cdot\frac{|z_k|}{z_k}\cdot
\end{equation}
The second object is the sequence
\begin{equation}
\label{RF}
 e(t,z_k)=\frac{1}{t-z_k},\quad 1\leq k\leq n,
 \end{equation}
 of rational fractions.

 Let \(\mathscr{L}(z_1,z_2,\,\ldots\,,z_n)\) be the linear space generated
 by the functions \( e(t,z_k),\,1\leq k\leq n\). In other words,
 \(\mathscr{L}(z_1,z_2,\,\ldots\,,z_n)\) is the set of all linear combinations
 \begin{math}
 \sum\limits_{1\leq k\leq h}\alpha_k\,  e(t,z_k),
 \end{math}
 where \(\alpha_k,\,1\leq k\leq n,\) are arbitrary complex numbers.

 Functions which belong to the linear space \(\mathscr{L}(z_1,z_2,\,\ldots\,,z_n)\) are continuous on the unit circle
 \(\mathbb{T}\). Therefore they belong to every space \(L^2_\sigma\), where
 \(\sigma\) is a nonnegative measure on \(\mathbb{T}\).
 If
 \begin{equation}
 \label{TF}
 f(t)=\sum\limits_{1\leq k\leq n}\xi_k\,e(t,z_k),\quad
 g(t)=\sum\limits_{1\leq k\leq n}\eta_k\,e(t,z_k)
 \end{equation}
 belong to \(\mathscr{L}(z_1,z_2,\,\ldots\,,z_n)\), then their scalar product
 in the space \(L^2_\sigma\) is equal to
 \begin{equation}
 \label{ScP}
 \langle\,f\,,\,g\,\rangle_{\!_{L^2_\sigma}}=
 \sum\limits_{1\leq k,l\leq n}\xi_k\,\overline{\eta_l}\,\langle\,e(.,z_k)\,,%
 \,e(.,z_l)\,\rangle_{\!_{L^2_\sigma}}.
 \end{equation}
 \emph{Among all nonnegative measures \(\sigma\) on \(\mathbb{T}\), we distinguish  the normalised Lebesgue measure \(m(dt)\)}.

 We discuss the following\\
 \textbf{Problem 1.} \textit{Let pairwise different points \(z_1,z_2,\,\ldots\,,z_n\) from \(\mathbb{D}_+\) be given. How to describe those nonnegative measures \(\sigma\) on \(\mathbb{T}\) for which the property
 \begin{equation}
\label{CoSP}
\langle\,f\,,\,g\,\rangle_{\!_{L^2_\sigma}}
=\langle\,f\,,\,g\,\rangle_{\!_{L^2_m}},\quad \forall\,f,g\in
\mathscr{L}(z_1,z_2,\,\ldots\,,z_n).
\end{equation}
  holds}?

\begin{defn}
\label{DSM}
 Given the set  \(z_1,z_2,\,\ldots\,,z_n\) of pairwise different points from \(\mathbb{D}_+\), let
  \(\Sigma(z_1,z_2,\,\ldots\,,z_n)\) be
 the set of all non-negative measures on \(\mathbb{T}\) for which the property \eqref{CoSP} holds.
\end{defn}
\begin{lem}
\label{PrSi}
 Given the set  \(z_1,z_2,\,\ldots\,,z_n\) of pairwise different points from \(\mathbb{D}_+\), then the set \(\Sigma(z_1,z_2,\,\ldots\,,z_n)\) is
 not empty, convex, and bounded set of measures which is closed with
 respect to the weak topology.
 \begin{proof}
 The set \(\Sigma(z_1,z_2,\,\ldots\,,z_n)\) is
 not empty: the Lebesgue measure \(m\) belongs to this set.
 The convexity of this set and its weak closeness are evident.
 The \(\Sigma(z_1,z_2,\,\ldots\,,z_n)\) is bounded. In  Lemma \ref{EsMe} we establish the estimate
\begin{equation}
\label{Est}
\sigma(\mathbb{T})\leq\frac{1+B(0)}{1-B(0)},\quad
\forall\,\sigma\in\Sigma(z_1,z_2,\,\ldots\,,z_n).
\end{equation}
(We emphasize that the inequalities \eqref{PoB} holds.)
 \end{proof}

\end{lem}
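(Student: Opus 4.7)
The four assertions split by difficulty. Nonemptiness is immediate since $m$ itself satisfies \eqref{CoSP}. Convexity is equally immediate because both sides of \eqref{CoSP} are linear in $\sigma$. For weak closedness, the key observation is that each of the $n^{2}$ products $e(t,z_{k})\,\overline{e(t,z_{l})}$ extends to a continuous (indeed $C^{\infty}$) function on $\mathbb{T}$, because the relevant poles $z_{k}$ and $1/\overline{z_{l}}$ all lie off $\mathbb{T}$ by \eqref{seq_a}; hence if $\sigma_{\alpha}\to\sigma$ in the weak topology with each $\sigma_{\alpha}\in\Sigma(z_{1},\ldots,z_{n})$, we can pass to the limit in the $n^{2}$ identities $\langle e(\cdot,z_{k}),e(\cdot,z_{l})\rangle_{\!_{L^{2}_{\sigma_{\alpha}}}}=\langle e(\cdot,z_{k}),e(\cdot,z_{l})\rangle_{\!_{L^{2}_{m}}}$ and combine with \eqref{ScP} to conclude $\sigma\in\Sigma$.

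Boundedness is the substantive point, but already a one-pole argument is enough to close the present lemma. Taking $f=g=e(\cdot,z_{1})$ in \eqref{CoSP} yields
\begin{equation*}
\int_{\mathbb{T}}\frac{d\sigma(t)}{|t-z_{1}|^{2}}=\int_{\mathbb{T}}\frac{dm(t)}{|t-z_{1}|^{2}}=\frac{1}{1-|z_{1}|^{2}},
\end{equation*}
and the elementary bound $|t-z_{1}|\le 1+|z_{1}|$ on $\mathbb{T}$ produces the uniform estimate $\sigma(\mathbb{T})\le (1+|z_{1}|)/(1-|z_{1}|)<\infty$ for every $\sigma\in\Sigma$. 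The finer constant $(1+B(0))/(1-B(0))$ in \eqref{Est}, which is strictly smaller than $(1+|z_{1}|)/(1-|z_{1}|)$ whenever $n\ge 2$ because $B(0)=\prod_{k}|z_{k}|$ aggregates all the points, is a genuinely joint statement about the whole sequence and is deferred to Lemma \ref{EsMe}.

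To prove the sharp estimate itself my plan would be to work with the Herglotz function $F(z)=(1+B(z))/(1-B(z))$: it has positive real part on $\mathbb{D}_{+}$ (since $|B|<1$ there by maximum modulus applied to \eqref{BP}), satisfies $F(0)=(1+B(0))/(1-B(0))$, and has $\re F\equiv 0$ almost everywhere on $\mathbb{T}$ (because $|B(t)|=1$ there forces $F|_{\mathbb{T}}$ to be purely imaginary wherever $B(t)\ne 1$). Consequently its Herglotz representing measure $\mu$ is purely atomic, supported on the finitely many zeros of $1-B$ on $\mathbb{T}$, with total mass exactly $\mu(\mathbb{T})=F(0)=(1+B(0))/(1-B(0))$. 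A short residue-type computation, using $F(z_{k})=1$ (because $B(z_{k})=0$) and the reflection $F(1/\overline{z_{l}})=-1$ (because $B$ has a pole there), checks that $\langle e(\cdot,z_{k}),e(\cdot,z_{l})\rangle_{\!_{L^{2}_{\mu}}}=1/(1-\overline{z_{l}}z_{k})=\langle e(\cdot,z_{k}),e(\cdot,z_{l})\rangle_{\!_{L^{2}_{m}}}$, so $\mu\in\Sigma$ attains the candidate bound. What remains is to prove that $\mu(\mathbb{T})$ is genuinely an upper bound on $\sigma(\mathbb{T})$ across $\Sigma$; this I expect to approach either by a Choquet decomposition of $\Sigma$ along the one-parameter family of measures attached to $(1+\alpha B)/(1-\alpha B)$, $|\alpha|=1$, or dually by exhibiting a single $h\in\mathscr{L}$ with $|h|^{2}\ge 1$ on $\mathbb{T}$ and $\|h\|^{2}_{L^{2}_{m}}=(1+B(0))/(1-B(0))$ (a construction patterned on the $n=1$ case $h(t)=(1+|z_{1}|)/(t-z_{1})$, whose contact with $|h|=1$ already occurs at the unique zero of $1-B$ on $\mathbb{T}$). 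Executing this construction, or the Choquet reduction, is the principal technical obstacle and is what Lemma \ref{EsMe} must supply.
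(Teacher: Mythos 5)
Your proof is correct, and for the one substantive claim --- boundedness --- it takes a genuinely different and more self-contained route than the paper. The paper disposes of nonemptiness, convexity and weak closedness in one line (calling the latter two ``evident'') and establishes boundedness only by forward reference to Lemma \ref{EsMe}, whose proof in turn rests on the full parametrization \eqref{SIPh} of \(\Sigma(z_1,\ldots,z_n)\) developed in the main theorem. You instead supply the missing details for closedness (continuity on \(\mathbb{T}\) of the \(n^2\) integrands \(e(t,z_k)\overline{e(t,z_l)}\), which is exactly the right observation, since the condition \eqref{CoSP} reduces via \eqref{ScP} to the finitely many equalities \eqref{TEq}), and you prove boundedness directly from the single identity \(\int_{\mathbb{T}}|t-z_1|^{-2}\,\sigma(dt)=(1-|z_1|^2)^{-1}\) together with \(|t-z_1|\le 1+|z_1|\), obtaining the uniform bound \((1+|z_1|)/(1-|z_1|)\). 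This is weaker than the sharp constant \((1+B(0))/(1-B(0))\) of \eqref{Est} (note \(B(0)=\prod_k|z_k|\le|z_1|\) and \(x\mapsto(1+x)/(1-x)\) is increasing on \([0,1)\)), but it is entirely sufficient for the lemma as stated, and it has the advantage of not relying on a result proved much later in the paper. Your closing sketch of how to reach the sharp constant is not needed for this lemma and is also more elaborate than what the paper actually does: once the parametrization \eqref{SIPh} is available, Lemma \ref{EsMe} follows immediately from \(\sigma(\mathbb{T})=h(0)=\re\bigl((1+B(0)\omega(0))/(1-B(0)\omega(0))\bigr)\) and the fact that \(\omega(0)\) ranges over the closed unit disc as \(\omega\) ranges over \(\boldsymbol{\mathcal{S}}\); no Choquet decomposition or extremal test function is required.
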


In view of \eqref{ScP}, the equality \eqref{CoSP} is equivalent to the totality of equalities
\begin{equation}
\label{TEq}
\langle\,e(.,z_k)\,,\,e(.,z_l)\,\rangle_{\!_{L^2_\sigma}}=
\langle\,e(.,z_k)\,,\,e(.,z_l)\,\rangle_{\!_{L^2_m}},\quad
1\leq k,l\leq n.
\end{equation}
The expressions in the right hand side of \eqref{TEq} can be calculated explicitly:
\begin{equation}
\label{ECa}
\langle\,e(.,z_k)\,,\,e(.,z_l)\,\rangle_{\!_{L^2_m}}=\frac{1}{1-z_k\overline{z_l}},
\quad 1\leq k,l\leq n.
\end{equation}
The  expressions in the left hand side of \eqref{TEq} can be presented as
 \begin{equation}
\label{PrA}
\langle\,e(.,z_k)\,,\,e(.,z_l)\,\rangle_{\!_{L^2_\sigma}}=
\int\limits_{\mathbb{T}}\frac{1}{t-z_k}\frac{1}{\overline{t-z_l}}\,\sigma(dt),
\quad 1\leq k,l\leq n.
\end{equation}

Thus the Problem 1 can be reformulated as follows:\\
\textbf{Problem \(\boldsymbol{1^\prime}\)}.  \textit{Let pairwise different points \(z_1,z_2,\,\ldots\,,z_n\) from \(\mathbb{D}_+\) be given. How to describe such nonnegative measures \(\sigma\) on \(\mathbb{T}\) for which the totality of equalities
\begin{equation}
\label{PIC}
\int\limits_{\mathbb{T}}\frac{1}{t-z_k}\frac{1}{\overline{t-z_l}}\,\sigma(dt)=
\frac{1}{1-z_k\overline{z_l}},\quad 1\leq k,l\leq n,
\end{equation}
hold}?

It turns out that the Problem \(1^\prime\) can be reduced to a very special Newanlinna-Pick interpolation problem.

Let \(\sigma\) be a nonnegative measure on \(\mathbb{T}\). We associate
 the function
\begin{equation}
\label{AsF}
\varphi_{\sigma}(z)=\int\limits_{\mathbb{T}}\frac{t+z}{t-z}\,\sigma(dt), \quad z\in\mathbb{D_+},
\end{equation}
with this \(\sigma\).
It is clear that the function \(\varphi_{\sigma}\) is holomorphic
 in \(\mathbb{D}_+\).

If \(\zeta^{\prime},\zeta^{''}\in\mathbb{D}_+\), then
\begin{equation}
\label{Id}
\frac{\varphi_{\sigma}(\zeta^{\prime})+
\overline{\varphi_{\sigma}(\zeta^{''})}}{2(1-\zeta^{'}\overline{\zeta^{''}})}=
\int\limits_{\mathbb{T}}\frac{1}{t-\zeta^{'}}\,\frac{1}{\overline{t-\zeta^{''}}}\,\sigma(dt).
\end{equation}
For \(\zeta^{'}=z_k,\,\zeta^{''}=z_l\), the expression in the right hand %
side
of \eqref{Id} coincides with the expression in the left hand side of \eqref{PIC}. Thus the equalities \eqref{PIC} can be rewritten in the form
\begin{equation*}
\frac{\varphi_{\sigma}(z_k)+
\overline{\varphi_{\sigma}(z_l)}}{1-z_k\overline{z_l}}=
\frac{2}{1-z_k\overline{z_l}}, \quad 1\leq k,l\leq n,
\end{equation*}
that is
\begin{equation}
\label{OF}
\varphi_{\sigma}(z_k)+\overline{\varphi_{\sigma}(z_l)}=
2,\quad 1\leq k,l\leq n.
\end{equation}

Thus the following statement is proved.
\begin{lem}
\label{CoCo}
Let \(z_1,z_2,\,\ldots\,,z_n\) be pairwise different points of the unit disc
\(\mathbb{D}_+\).
For  a non-negative measure \(\sigma\) on \(\mathbb{T}\), let
\(\varphi_\sigma\) be the function  associated with the measure \(\sigma\) according to \eqref{AsF}.
\begin{enumerate}
\item
If \(\sigma\in\Sigma(z_1,z_2,\,\ldots\,,z_n)\),
then the equalities \eqref{OF} hold.
\item
If  the equalities \eqref{OF} hold, then
 the \(\sigma\in\Sigma(z_1,z_2,\,\ldots\,,z_n)\).
\end{enumerate}
\end{lem}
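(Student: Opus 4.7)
The plan is that both implications of Lemma~\ref{CoCo} follow from a chain of equivalences
\[
 \eqref{CoSP} \;\Longleftrightarrow\; \eqref{TEq} \;\Longleftrightarrow\; \eqref{PIC} \;\Longleftrightarrow\; \eqref{OF},
\]
which is already almost entirely assembled in the paragraphs preceding the statement. My proof will just tie these equivalences together and then specialize $\zeta'=z_k$, $\zeta''=z_l$.

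First, \eqref{CoSP} is equivalent to the family of pairwise equalities \eqref{TEq}: this is immediate from the sesquilinear expansion \eqref{ScP}, since the fractions $e(\cdot,z_1),\dots,e(\cdot,z_n)$ span $\mathscr{L}(z_1,\dots,z_n)$. Next, combining the explicit evaluation \eqref{ECa} of the Lebesgue inner products with the integral form \eqref{PrA} identifies \eqref{TEq} with the system \eqref{PIC}. So the substantive content of the lemma reduces to showing that \eqref{PIC} is equivalent to \eqref{OF}, and this is precisely what the identity \eqref{Id} delivers: setting $\zeta'=z_k$, $\zeta''=z_l$ in \eqref{Id} and comparing with \eqref{PIC} turns each equation of \eqref{PIC} into the corresponding equation of \eqref{OF}, after cancellation of the common factor $1-z_k\overline{z_l}$.

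The one step not yet laid out is the verification of \eqref{Id} itself. I would carry this out by direct computation on $\mathbb{T}$: since $\bar t = 1/t$ for $t\in\mathbb{T}$, one has $\overline{t-\zeta''} = (1-t\overline{\zeta''})/t$, and a short algebraic simplification gives
\[
 \frac{t+\zeta'}{t-\zeta'} \;+\; \frac{1+t\overline{\zeta''}}{1-t\overline{\zeta''}}
 \;=\; \frac{2t(1-\zeta'\overline{\zeta''})}{(t-\zeta')(1-t\overline{\zeta''})}
 \;=\; \frac{2(1-\zeta'\overline{\zeta''})}{(t-\zeta')\,\overline{t-\zeta''}},
\]
after which division by $2(1-\zeta'\overline{\zeta''})$ and integration against $\sigma$ yields \eqref{Id}.

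The main obstacle is essentially bookkeeping: keeping careful track of the complex conjugates on both sides of \eqref{Id} (in particular the placement of the bar in $1-\zeta'\overline{\zeta''}$) and exploiting the relation $\bar t = 1/t$, which is where the restriction $t\in\mathbb{T}$ enters in an essential way. Once that identity is secured, both halves of the lemma follow simultaneously and formally from the chain displayed above, since each link in the chain is an equivalence rather than an implication.
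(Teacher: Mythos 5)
Your proposal is correct and follows exactly the paper's own route: the paper establishes Lemma~\ref{CoCo} by the same chain \eqref{CoSP} \(\Leftrightarrow\) \eqref{TEq} \(\Leftrightarrow\) \eqref{PIC} \(\Leftrightarrow\) \eqref{OF}, using the identity \eqref{Id} with \(\zeta'=z_k\), \(\zeta''=z_l\). The only difference is that you also supply the algebraic verification of \eqref{Id} (which the paper asserts without proof), and your computation of it is correct.
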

We consider the equalities \eqref{OF} as a system of equations with respect to the values \(\varphi_{\sigma}(z_k)\).
\begin{lem}
\label{SpSys}
Let \(\varphi_1,\varphi_2,\,\ldots\,,\varphi_n\) be complex numbers satisfying the system of equations
\begin{equation}
\label{soe}
\varphi_k+\overline{\varphi_l}=2,\quad 1\leq k,l\leq n.
\end{equation}
Then there exists a real number \(\beta\) such that
\begin{equation}
\label{sol}
\varphi_k=1-i\beta,\quad 1\leq k\leq n.
\end{equation}
Vice versa, if \(\beta\) is an arbitrary real number and \(\varphi_k\) are
defined by \eqref{sol}, then the equalities \eqref{soe} hold.

Thus the  set
of solutions of the system of equations \eqref{soe} is an one-parametric family. The arbitrary real number \(\beta\) is a free parameter which parameterizes this family according to \eqref{sol}.
\end{lem}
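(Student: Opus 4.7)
The proof should be a short elementary computation in two steps: extract the real part of each $\varphi_k$ from the diagonal equations, then use the off-diagonal equations to pin down the imaginary parts.

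First I would specialize \eqref{soe} to $k=l$. This gives $\varphi_k+\overline{\varphi_k}=2$, that is, $2\re\varphi_k=2$, so $\re\varphi_k=1$ for every $k=1,\dots,n$. Thus each $\varphi_k$ already has the form $\varphi_k=1+i\gamma_k$ for some real $\gamma_k$.

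Next I would plug this representation back into the general equation \eqref{soe}. Since $\overline{\varphi_l}=1-i\gamma_l$, the relation $\varphi_k+\overline{\varphi_l}=2$ becomes $2+i(\gamma_k-\gamma_l)=2$, forcing $\gamma_k=\gamma_l$ for all $k,l$. Hence there is a single real number $\gamma$ with $\gamma_k=\gamma$ for every $k$; setting $\beta=-\gamma$ yields \eqref{sol}.

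For the converse direction, the verification is immediate: if $\varphi_k=1-i\beta$ with $\beta\in\mathbb{R}$, then $\overline{\varphi_l}=1+i\beta$, and $\varphi_k+\overline{\varphi_l}=2$.

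There is no real obstacle here; the only thing to be careful about is using the diagonal case $k=l$ first (to force $\re\varphi_k=1$) before invoking the off-diagonal cases, since otherwise the structure of the family is not visible. The free parameter $\beta$ appears because the system \eqref{soe} is invariant under the simultaneous transformation $\varphi_k\mapsto\varphi_k+i\tau$ (which changes $\overline{\varphi_l}$ by $-i\tau$), and the lemma says this is the only degree of freedom.
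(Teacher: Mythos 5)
Your proof is correct, but it takes a different route from the paper's. The paper rewrites the system \eqref{soe} in the matrix form $\boldsymbol{\varphi}^\ast\boldsymbol{e}+\boldsymbol{e}^\ast\boldsymbol{\varphi}=2\,\boldsymbol{e}^\ast\boldsymbol{e}$ with $\boldsymbol{e}=[1,\dots,1]$, observes that the right-hand side has rank one while the left-hand side would have rank two unless $\boldsymbol{\varphi}=\zeta\,\boldsymbol{e}$ for some $\zeta\in\mathbb{C}$, and then substitutes back to find $\zeta=1-i\beta$. You instead work componentwise: the diagonal equations $k=l$ give $\re\varphi_k=1$, and the off-diagonal equations then force all the imaginary parts to coincide. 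Your argument is more elementary and entirely self-contained, whereas the paper's rank claim (that $\boldsymbol{\varphi}^\ast\boldsymbol{e}+\boldsymbol{e}^\ast\boldsymbol{\varphi}$ has rank two whenever $\boldsymbol{\varphi}$ and $\boldsymbol{e}$ are linearly independent) is left unjustified and itself requires a short verification. The matrix formulation buys a compact, coordinate-free statement that would generalize to other rank-one right-hand sides, but for this specific lemma your direct computation is at least as transparent. Your closing observation that the only degree of freedom is the shift $\varphi_k\mapsto\varphi_k+i\tau$ is a nice structural remark consistent with the lemma's conclusion.
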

\begin{proof} Let \(\boldsymbol{e}=[1,1,\,\ldots\,,1]\) and
be  \(\boldsymbol{\varphi}=[\varphi_1,\varphi_2,\,\ldots\,,\varphi_n]\)
be \(1\times n\) matrices (rows), \(\boldsymbol{e}^\ast\) and
\(\boldsymbol{\varphi}^\ast\) be the Hermitian conjugate matrices (columns).
The system \eqref{soe} can be rewritten in a matrix form
\begin{equation}
\label{mf}
\boldsymbol{\varphi}^\ast\boldsymbol{e}+
\boldsymbol{e}^\ast\boldsymbol{\varphi}=2\,\boldsymbol{e}^\ast\boldsymbol{e}\,.
\end{equation}
The matrix in the right hand side of \eqref{mf} is of rank one. Thus \(\boldsymbol{\varphi}\) must be of the form
\(\boldsymbol{\varphi}=\zeta\,\boldsymbol{e}\), where \(\zeta\in\mathbb{C}\).
(Otherwise the matrix in the left hand side of \eqref{mf} is of rank two.)
Substituting \(\boldsymbol{\varphi}=\zeta\,\boldsymbol{e}\) into \eqref{mf},
we see that \(\boldsymbol{\varphi}=(1-i\beta)\,\boldsymbol{e}\),
where \(\beta\in\mathbb{R}\).
\end{proof}
From the equalities \eqref{OF} and Lemma \ref{SpSys} it follows that there exists a real number
\(\beta\) such that the equalities
\begin{equation}
\label{FE}
\varphi_{\sigma}(z_k)=1-i\beta,\quad 1\leq k \leq n.
\end{equation}
hold.

Given a non-negative measure \(\sigma\) on \(\mathbb{T}\) and a real number \(\beta\), we associate the function
 \begin{equation}
\label{cb}
c_{\sigma,\beta}(z)=i\beta+\int\limits_{\mathbb{T}}\frac{t+z}{t-z}\,\sigma(dt),
\quad z\in\mathbb{D}_+.
\end{equation}
with these \(\sigma\) and \(\beta\). It is clear that
\begin{equation}
\label{RePa}
\re c_{\sigma,\beta}(z)=h_{\sigma}(z),
\end{equation}
where
\begin{equation}
\label{AsFh}
h_{\sigma}(z)=\int\limits_{\mathbb{T}}\frac{1-|z|^2}{|t-z|^2}\,\sigma(dt), \quad z\in\mathbb{D_+}.
\end{equation}

The equalities \eqref{FE} can be rewritten as
\begin{equation}
\label{ReE}
c_{\sigma,\beta}(z_k)=1,\quad 1\leq k\leq n.
\end{equation}

Lemma \ref{CoCo} can be reformulated as follows.
\begin{lem}
\label{ReL}
Let \(z_1,z_2,\,\ldots\,,z_n\) be pairwise different numbers from \(\mathbb{D}_+\). For  a non-negative measure \(\sigma\) on \(\mathbb{T}\) and a real number \(\beta\), let
\(c_{\sigma,\beta}(z)\) be the function  associated with these \(\sigma\)
and \(\beta\) according to \eqref{AsF}.
\begin{enumerate}
\item
If \(\sigma\in\Sigma(z_1,z_2,\,\ldots\,,z_n)\), then there exists the real number
\(\beta\) such that the interpolation conditions \eqref{ReE} hold
for the function \(c_{\sigma,\beta}\).
\item
If the interpolation conditions \eqref{ReE} hold for some function \(c_{\sigma,\beta}\), then \(\sigma\in\Sigma(z_1,z_2,\,\ldots\,,z_n)\).
\end{enumerate}
\end{lem}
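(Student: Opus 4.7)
The plan is to obtain Lemma \ref{ReL} as a direct translation of Lemma \ref{CoCo}, using Lemma \ref{SpSys} as the bridge and the trivial relation $c_{\sigma,\beta}(z) = i\beta + \varphi_\sigma(z)$ built into definition \eqref{cb}. No new analytic input is needed; the content is already fully encoded in the earlier lemmas, and the role of the shift by $i\beta$ is precisely to absorb the one-parameter ambiguity identified in Lemma \ref{SpSys}.

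For part (1), I would start from a measure $\sigma\in\Sigma(z_1,z_2,\ldots,z_n)$ and invoke Lemma \ref{CoCo}(1), which delivers the system \eqref{OF}: $\varphi_\sigma(z_k)+\overline{\varphi_\sigma(z_l)}=2$ for $1\le k,l\le n$. Applying Lemma \ref{SpSys} to the $n$-tuple $\bigl(\varphi_\sigma(z_k)\bigr)_{k=1}^{n}$, I extract a real number $\beta$ with $\varphi_\sigma(z_k)=1-i\beta$ for every $k$. Adding $i\beta$ to both sides and comparing with \eqref{cb} immediately yields $c_{\sigma,\beta}(z_k)=1$, which is the interpolation condition \eqref{ReE}.

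For part (2), I would simply reverse this chain. Given a real $\beta$ for which $c_{\sigma,\beta}(z_k)=1$ holds for $1\le k\le n$, definition \eqref{cb} gives $\varphi_\sigma(z_k)=1-i\beta$ for each $k$. Then for arbitrary $k,l$,
\begin{equation*}
\varphi_\sigma(z_k)+\overline{\varphi_\sigma(z_l)}=(1-i\beta)+(1+i\beta)=2,
\end{equation*}
which is exactly the system \eqref{OF}. Lemma \ref{CoCo}(2) now yields $\sigma\in\Sigma(z_1,z_2,\ldots,z_n)$.

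There is no genuine obstacle; the only point worth flagging is the asymmetric role of $\beta$ in the two halves. In part (1) the parameter $\beta$ is not free: Lemma \ref{SpSys} forces $\beta=-\im\varphi_\sigma(z_1)$, and independence of the choice of $z_k$ is built into the conclusion of that lemma. In part (2), by contrast, $\beta$ appears in the hypothesis, and the statement only asserts that existence of some real $\beta$ making \eqref{ReE} hold is sufficient for $\sigma$ to belong to $\Sigma(z_1,z_2,\ldots,z_n)$.
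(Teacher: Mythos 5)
Your proposal is correct and follows exactly the route the paper itself takes (the paper presents Lemma \ref{ReL} as a reformulation of Lemma \ref{CoCo} obtained by combining it with Lemma \ref{SpSys} and the definition \eqref{cb}, which is precisely your argument in both directions). Nothing further is needed.
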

The functions of the form \eqref{ReE}, where \(\sigma\) is an arbitrary
non-negative measure on \(\mathbb{T}\) and \(\beta\) is an arbitrary real
number, can be characterized by their properties.
\begin{defn}
\label{DeCaCl} \emph{The Caratheodory class} \(\boldsymbol{\mathcal{C}}\) is the class of all functions \(c(z)\)
which possess the properties:
\begin{enumerate}
\item
The function \(c(.)\) is defined and holomorphic in the disc \(\mathbb{D}_+\).
\item
The real part of the function \(c(.)\) is non-negative in \(\mathbb{D}\):
\begin{equation}
\label{PRP}
\re \,c(z)\geq 0\quad \forall z\in\mathbb{D}_+.
\end{equation}
\end{enumerate}
\end{defn}

\textbf{The Representation Theorem for the Caratheodory class.}
\begin{enumerate}
\item
\textit{Let \(\sigma\) be a non-negative measure on \(\mathbb{T}\), \(\beta\) is
a real number, and \(c_{\sigma,\beta}(.)\) be the function defined by
the equality \eqref{cb}. Then the function \(c_{\sigma,\beta}(.)\)
belongs to the Caratheodory class \(\boldsymbol{\mathcal{C}}\).}
\item
\textit{Let \(c(.)\) be a function which belongs to the Caratheodory class
\(\boldsymbol{\mathcal{C}}\). Then the function \(c(.)\) is representable
in the form \eqref{cb}:
\begin{equation}
\label{rcc}
c(z)=c_{\sigma,\beta}(z)
\end{equation}
with some non-negative measure \(\sigma\) and real number \(\beta\).
The measure \(\sigma\) and number \(\beta\) are determined by the
function \(c(.)\) uniquely.}
\end{enumerate}

\vspace{2.0ex}
The next statement is an immediate consequence of
Lemma \ref{ReL} and of the Representation Theorem.
\begin{lem}
Let \(z_1,z_2,\,\ldots\,,z_n\) be pairwise different points from \(\mathbb{D}_+\).
Then the set of non-negative measures \(\sigma\)
belonging to the set \(\Sigma(z_1,z_2,\,\ldots\,,z_n)\)
coincides with the set of measures \(\sigma\) appearing as the representing
measures, \eqref{rcc}, for those functions \(c(.)\in\boldsymbol{\mathcal{C}}\) which satisfy the interpolating
conditions
\begin{equation}
\label{ICCa}
c(z_k)=1,\quad 1\leq k\leq n.
\end{equation}
\end{lem}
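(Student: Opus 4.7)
The plan is to prove the two set inclusions directly, using Lemma \ref{ReL} in one direction and the Representation Theorem in the other; the argument is essentially a bookkeeping exercise assembling statements that have already been proved.

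For the inclusion ``$\Sigma(z_1,\ldots,z_n) \subseteq \{\text{representing measures of }c\in\boldsymbol{\mathcal{C}}\text{ with }c(z_k)=1\}$'', I would start with an arbitrary $\sigma\in\Sigma(z_1,\ldots,z_n)$. By Lemma \ref{ReL}(1) applied to $\sigma$, there is a real number $\beta$ such that the function $c_{\sigma,\beta}$ defined by \eqref{cb} satisfies the interpolation conditions \eqref{ICCa}. By part (1) of the Representation Theorem, $c_{\sigma,\beta}$ belongs to the Caratheodory class $\boldsymbol{\mathcal{C}}$. Taking $c:=c_{\sigma,\beta}$, this exhibits $\sigma$ as the representing measure of a function in $\boldsymbol{\mathcal{C}}$ satisfying \eqref{ICCa}.

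For the reverse inclusion, I would take any $c\in\boldsymbol{\mathcal{C}}$ satisfying \eqref{ICCa}. By part (2) of the Representation Theorem, there exist a uniquely determined non-negative measure $\sigma$ on $\mathbb{T}$ and a uniquely determined real number $\beta$ such that $c=c_{\sigma,\beta}$. Substituting this identity into the hypothesis $c(z_k)=1$ yields $c_{\sigma,\beta}(z_k)=1$ for $1\le k\le n$, which is precisely the condition \eqref{ReE}. Lemma \ref{ReL}(2) then gives $\sigma\in\Sigma(z_1,\ldots,z_n)$, as required.

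There is no genuine obstacle: both parts of Lemma \ref{ReL} and both parts of the Representation Theorem have been stated, and the proof is simply their composition. The only care needed is to recognize that the parameter $\beta$ is not extra data attached to a measure $\sigma$ in $\Sigma$, but is determined (via the Representation Theorem applied to the associated function $c_{\sigma,\beta}$) by the requirement that the interpolation values equal $1$; the uniqueness clause in the Representation Theorem ensures that the correspondence $c \leftrightarrow \sigma$ is well-defined in both directions, so the two sets of measures genuinely coincide.
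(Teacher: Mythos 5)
Your proof is correct and follows exactly the route the paper intends: the paper states this lemma as "an immediate consequence of Lemma \ref{ReL} and of the Representation Theorem" without writing out the details, and your two inclusions are precisely the composition of those two results that the author has in mind. Nothing further is needed.
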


Thus the original problem, i.e. Problem 1, is related to the following
interpolation problem in the Caratheodory class. \\
\textbf{Problem \(\boldsymbol{2_{\,\mathcal{C}}}\).}
\textit{
 Let \(z_1,z_2,\,\ldots\,,z_n\) be pairwise different points from \(\mathbb{D}_+\).
 \emph{The function \(c\) is a solution of the
Problem~\(\boldsymbol{2_{\,\mathcal{C}}}\)} if \(c\in\boldsymbol{\mathcal{C}}\),
and  the interpolation conditions \eqref{ICCa} are satisfied.}

This interpolation
problem is the \emph{Nevanlinna-Pick problem in the class \(\mathcal{C}\)} with  generic interpolation nodes, but with  very special interpolation
values.

There is the very well developed machinery for study
the Nevanlinna-Pick problems in various functional classes. (See \cite{Kov}.)
This machinery includes the solvability criteria as well as
the description of the set of all solutions in the case of solvability.
We do not need to use such machinery. The set of solutions of the Nevanlinna-Pick problem \eqref{ICCa} in the class \(\boldsymbol{\mathcal{C}}\) can be easily describe without using the mentioned general theory.

\begin{defn}
\label{DeShCl} \emph{The Schur class} \(\boldsymbol{\mathcal{S}}\) is the class of all functions \(s(z)\)
which possess the properties:
\begin{enumerate}
\item
The function \(s(.)\) is defined and holomorphic in the disc \(\mathbb{D}_+\).
\item
The absolute value of the function \(s(.)\) does not exceed one in \(\mathbb{D}\):
\begin{equation}
\label{PRS}
|s(z)|\leq 1\quad \forall z\in\mathbb{D}_+.
\end{equation}
\end{enumerate}
\end{defn}

\vspace{3.0ex}
\noindent
The fractional linear transformation
\begin{equation}
\label{FrLT}
c(z)=\frac{1+s(z)}{1-s(z)}
\end{equation}
establishes one-to one correspondence  between classes \(\boldsymbol{\mathcal{C}}\) and \(\boldsymbol{\mathcal{S}}\).

It is clear that if the functions \(c\) and \(s\) are related by the transformation \eqref{FrLT}, then the interpolation conditions
\begin{equation}
\label{ICSh}
s(z_k)=0, \quad 1\leq k\leq n,
\end{equation}
for the function \(s\) correspond to the interpolation conditions \eqref{ICCa} to the function \(c\).

Thus the interpolation Problem \(2_{\mathcal{\,C}}\) in the class \(\boldsymbol{\mathcal{C}}\) is reduced to the following interpolation
problem in the class \(\boldsymbol{\mathcal{S}}\).\\
\textbf{Problem \(\boldsymbol{2_{\,\mathcal{S}}}\).}
\textit{
Let \(z_1,z_2,\,\ldots\,,z_n\) be pairwise different points from \(\mathbb{D}_+\).
 The function \(s\) is a solution of the
Problem~\(\boldsymbol{2_{\,\mathcal{S}}}\) if \(s\in\boldsymbol{\mathcal{S}}\),
and  the interpolation conditions \eqref{ICSh} are satisfied.}

\begin{lem}
\label{ReCaSh}
Let \(c\) and \(s\) be functions related by the fractional linear transformation \eqref{FrLT}. The function \(c\) is a solution of the
interpolation Problem \(2_{\,\mathcal{C}}\) if and only if the function \(s\)
is a solution of the
interpolation Problem \(2_{\,\mathcal{S}}\).
\end{lem}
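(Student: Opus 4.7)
The plan is to combine two routine ingredients that have already been set up in the text. First, invoke the bijection between classes asserted right before the lemma: the M\"obius transformation $w\mapsto\frac{1+w}{1-w}$ carries the closed unit disc bijectively onto the closed right half-plane, and consequently \eqref{FrLT} establishes a one-to-one correspondence between $\boldsymbol{\mathcal{S}}$ and $\boldsymbol{\mathcal{C}}$. (One small care: if $s\in\boldsymbol{\mathcal{S}}$ and $s(z_0)=1$ at some $z_0\in\mathbb{D}_+$, then by the maximum modulus principle $s\equiv 1$, so in that degenerate case $c=\frac{1+s}{1-s}$ is not a well-defined element of $\boldsymbol{\mathcal{C}}$; however, the interpolation condition $s(z_k)=0$ at the points $z_k\in\mathbb{D}_+$ immediately rules out $s\equiv 1$, so this degeneracy does not occur for solutions of Problem~$2_{\,\mathcal{S}}$.)

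Second, compare the scalar interpolation conditions pointwise. For each $k$, the equation $c(z_k)=1$ reads $\frac{1+s(z_k)}{1-s(z_k)}=1$, which is equivalent (after multiplying by $1-s(z_k)$, justified since $s(z_k)$ being a solution of this scalar equation forces $s(z_k)\neq 1$) to $1+s(z_k)=1-s(z_k)$, i.e.\ $s(z_k)=0$. Thus condition \eqref{ICCa} on $c$ is equivalent, node by node, to condition \eqref{ICSh} on $s$.

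Putting the two pieces together: if $c$ solves Problem~$2_{\,\mathcal{C}}$, then $c\in\boldsymbol{\mathcal{C}}$ determines a unique $s\in\boldsymbol{\mathcal{S}}$ via the inverse of \eqref{FrLT}, and the pointwise equivalence just established gives $s(z_k)=0$ for all $k$, so $s$ solves Problem~$2_{\,\mathcal{S}}$. Conversely, if $s$ solves Problem~$2_{\,\mathcal{S}}$, then in particular $s\not\equiv 1$, so $c=\frac{1+s}{1-s}$ is a well-defined element of $\boldsymbol{\mathcal{C}}$, and $c(z_k)=1$ for all $k$ by the same pointwise equivalence.

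There is no real obstacle here; the lemma is a formal consequence of the class bijection and a one-line algebraic manipulation. The only point needing any vigilance is the exclusion of the constant solution $s\equiv 1$, which is automatic as soon as one has at least one interpolation node $z_k\in\mathbb{D}_+$ at which $s$ must vanish.
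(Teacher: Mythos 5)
Your proof is correct and follows essentially the same route as the paper, which simply observes that the fractional linear transformation \eqref{FrLT} is a bijection between the classes \(\boldsymbol{\mathcal{S}}\) and \(\boldsymbol{\mathcal{C}}\) under which the conditions \(s(z_k)=0\) and \(c(z_k)=1\) correspond node by node. Your additional remark excluding the degenerate constant \(s\equiv 1\) is a sound refinement of a point the paper leaves implicit.
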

\begin{lem}
\label{DSIPs}
The interpolation \emph{Problem \(2_{\,\boldsymbol{\mathcal{S}}}\)} is
solvable. The formula
\begin{equation}
\label{SIPs}
s(z)=B(z)\,\omega(z),
\end{equation}
 where \(B(z)\) is the Blaschke product \eqref{BP} constructed from the given interpolation
nodes \(z_k,\,1\leq k\leq n\), parameterizes  the set of
all solutions \(s\) of the \emph{Problem \(2_{\,\boldsymbol{\mathcal{S}}}\)}
by means of arbitrary functions \(\omega\in\boldsymbol{\mathcal{S}}\):
\begin{enumerate}
\item
If \(s\) is a solution of the \emph{Problem \(2_{\,\boldsymbol{\mathcal{S}}}\)}, then \(s\) is representable in the form \eqref{SIPs}, where \(\omega\in\boldsymbol{\mathcal{S}}\).
\item
If \(\omega\in\boldsymbol{\mathcal{S}}\) and \(s\) is related to \(\omega\)
by \eqref{SIPs}, then \(s\) is a solution of
 \emph{Problem~\(2_{\,\boldsymbol{\mathcal{S}}}\)}.
 \item
 The correspondence \eqref{SIPs} between parameters
 \(\omega\in\boldsymbol{\mathcal{S}}\) and solutions \(s\) of
 \emph{Problem~\(2_{\,\boldsymbol{\mathcal{S}}}\)} is one-to-one.
\end{enumerate}
\end{lem}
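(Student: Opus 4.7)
The plan is to handle the three assertions in the order (2), (1), (3), with (1) being the heart of the argument. First I would collect the standard boundary behaviour of the Blaschke product $B$: each factor (namely $\frac{z_k-t}{1-\overline{z_k}t}\cdot\frac{|z_k|}{z_k}$, or $t$ when $z_k=0$) is holomorphic on $\mathbb{D}_+$, extends continuously to $\overline{\mathbb{D}_+}$ (its only pole lies in $\mathbb{D}_-$), has modulus exactly $1$ on $\mathbb{T}$, and hence, by the maximum modulus principle, has modulus $\le 1$ on $\mathbb{D}_+$; moreover it has a unique simple zero in $\mathbb{D}_+$, located at $z_k$. Consequently $B$ lies in $\boldsymbol{\mathcal{S}}$, satisfies $|B(t)|=1$ for all $t\in\mathbb{T}$, and has exactly the simple zeros $z_1,\dots,z_n$ in $\mathbb{D}_+$.

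With these properties recorded, assertion (2) is immediate: for any $\omega\in\boldsymbol{\mathcal{S}}$, the product $s=B\omega$ is holomorphic in $\mathbb{D}_+$, satisfies $|s(z)|=|B(z)|\cdot|\omega(z)|\le 1$ there, and $s(z_k)=B(z_k)\omega(z_k)=0$. Thus $s$ solves Problem~$2_{\,\boldsymbol{\mathcal{S}}}$, and in particular the problem is solvable (choose e.g.\ $\omega\equiv 0$).

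For (1), given a solution $s$, define $\omega(z)=s(z)/B(z)$ on $\mathbb{D}_+\setminus\{z_1,\dots,z_n\}$. Since each $z_k$ is a simple zero of $B$ and $s$ also vanishes at $z_k$, these singularities are removable, so $\omega$ extends to a function holomorphic on all of $\mathbb{D}_+$. The only nontrivial point is the bound $|\omega(z)|\le 1$, which is the main technical step. I would use that $|B(z)|\to 1$ uniformly as $|z|\to 1^-$: indeed, $B$ is continuous on $\overline{\mathbb{D}_+}$ and $|B(t)|=1$ on $\mathbb{T}$, so $M(r):=\min_{|z|=r}|B(z)|\to 1$ as $r\to 1^-$. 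Pick $r\in(0,1)$ large enough to enclose all the zeros $z_k$; then $\omega$ is holomorphic on the closed disc $\{|z|\le r\}$, and on its boundary $|\omega(z)|=|s(z)|/|B(z)|\le 1/M(r)$. The maximum modulus principle yields $|\omega(z)|\le 1/M(r)$ throughout $\{|z|\le r\}$. Fixing an arbitrary $z\in\mathbb{D}_+$ and letting $r\to 1^-$, we obtain $|\omega(z)|\le 1$; hence $\omega\in\boldsymbol{\mathcal{S}}$.

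Assertion (3) is then immediate: if $B\omega_1=B\omega_2$, then $\omega_1=\omega_2$ on the open dense set $\mathbb{D}_+\setminus\{z_1,\dots,z_n\}$, and hence everywhere on $\mathbb{D}_+$ by continuity. The only genuine obstacle in the whole argument is the uniform boundary estimate on $|B|$ underlying the proof of (1); the remaining assertions reduce to standard properties of Blaschke factors and routine algebra.
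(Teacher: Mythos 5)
The paper states Lemma~\ref{DSIPs} without any proof, treating it as a classical fact about the homogeneous Nevanlinna--Pick problem, so there is no argument in the text to compare against; what matters is whether your blind proof is sound, and it is. Part (2) and the solvability claim are indeed immediate from the recorded properties of the Blaschke factors, and part (3) follows from the identity theorem as you say. The substance is in part (1), and your argument there is the standard elementary one and is complete: the singularities of $s/B$ at the $z_k$ are removable because the zeros of $B$ are simple, and the bound $|\omega|\le 1$ follows from the maximum modulus principle on $\{|z|\le r\}$ together with the fact that $M(r)=\min_{|z|=r}|B(z)|\to 1$ as $r\to 1^-$, which is legitimate here precisely because $B$ is a \emph{finite} Blaschke product and hence continuous and unimodular on $\overline{\mathbb{D}_+}\cap\mathbb{T}$; you correctly take $r$ past all the $|z_k|$ so that $M(r)>0$. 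An alternative route often seen in the literature is to peel off one Blaschke factor at a time via the Schwarz--Pick lemma and induct on $n$; your global limiting argument avoids the induction and is arguably cleaner for a finite product, while the inductive version has the advantage of extending to infinite Blaschke products where the uniform estimate $M(r)\to 1$ fails. Either way, your proof fills a gap the paper leaves open rather than diverging from it.
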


In view of the mentioned relation between Problems \(2_{\mathcal{S}}\) and \(2_{\mathcal{C}}\), the following result is established.
\begin{lem}
\label{DSIPc}
The interpolation \emph{Problem \(2_{\,\boldsymbol{\mathcal{C}}}\)} is
solvable. The formula
\begin{equation}
\label{SIPc}
c(z)=\frac{1+B(z)\,\omega(z)}{1-B(z)\,\omega(z)},
\end{equation}
 where \(B(z)\) is the Blaschke product \eqref{BP} constructed from the given interpolation
nodes \(z_k,\,1\leq k\leq n\), parameterizes  the set of
all solutions \(c\) of the \emph{Problem \(2_{\,\boldsymbol{\mathcal{C}}}\)}
by means of arbitrary functions \(\omega\in\boldsymbol{\mathcal{S}}\):
\begin{enumerate}
\item
If \(c\) is a solution of the \emph{Problem \(2_{\,\boldsymbol{\mathcal{C}}}\)}, then \(c\) is representable in the form \eqref{SIPc}, where \(\omega\in\boldsymbol{\mathcal{S}}\).
\item
If \(\omega\in\boldsymbol{\mathcal{S}}\) and \(c\) is related to \(\omega\)
by \eqref{SIPc}, then \(c\) is a solution of
 \emph{Problem~\(2_{\,\boldsymbol{\mathcal{C}}}\)}.
 \item
 The correspondence \eqref{SIPc} between parameters
 \(\omega\in\boldsymbol{\mathcal{S}}\) and solutions \(c\) of
 \emph{Problem~\(2_{\,\boldsymbol{\mathcal{C}}}\)} is one-to-one.
\end{enumerate}
\end{lem}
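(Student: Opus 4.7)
The plan is to deduce this lemma directly from Lemma \ref{DSIPs} by pushing its parameterization through the fractional linear transformation \eqref{FrLT}, using Lemma \ref{ReCaSh} as the bridge between the two interpolation problems. Symbolically: substituting $s(z)=B(z)\omega(z)$ into $c=(1+s)/(1-s)$ yields exactly the right-hand side of \eqref{SIPc}, so the whole statement should be a routine transfer of the Schur-class result to the Caratheodory class.

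I would prove the three items in order. For (1), I would start with a solution $c$ of Problem $2_{\mathcal{C}}$. By Lemma \ref{ReCaSh}, the Schur function $s=(c-1)/(c+1)$ solves Problem $2_{\mathcal{S}}$. Lemma \ref{DSIPs}(1) then produces $\omega\in\boldsymbol{\mathcal{S}}$ with $s=B\omega$, and solving \eqref{FrLT} for $c$ in terms of $s$ gives \eqref{SIPc}. For (2), given any $\omega\in\boldsymbol{\mathcal{S}}$, I would set $s(z)=B(z)\omega(z)$; this lies in $\boldsymbol{\mathcal{S}}$ (as $|B|\le 1$ and $|\omega|\le 1$ on $\mathbb{D}_+$) and vanishes at every $z_k$ since $B(z_k)=0$, so by Lemma \ref{DSIPs}(2) it solves Problem $2_{\mathcal{S}}$; Lemma \ref{ReCaSh} then turns it into a solution of Problem $2_{\mathcal{C}}$, and an explicit substitution shows that this solution is \eqref{SIPc}. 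For (3), the required bijectivity follows by composing the bijection $\omega\leftrightarrow s$ from Lemma \ref{DSIPs}(3) with the bijection $s\leftrightarrow c$ given by \eqref{FrLT} on the appropriate subdomains.

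The only nontrivial point to verify is that \eqref{SIPc} really defines a holomorphic function in $\mathbb{D}_+$, i.e.\ that $1-B(z)\omega(z)$ does not vanish there. Since the Schur function $s=B\omega$ satisfies $s(z_k)=0$, it is not identically equal to the constant $1$; by the maximum modulus principle applied to $s\in\boldsymbol{\mathcal{S}}$, this forces $|s(z)|<1$ throughout $\mathbb{D}_+$, so $1-B(z)\omega(z)\neq 0$ there and the fractional linear transformation is well defined. There is no genuine obstacle beyond this check: once Lemma \ref{DSIPs} is in hand and the Cayley transform \eqref{FrLT} is used, the rest of the argument is formal bookkeeping, exactly as the sentence preceding the lemma promises.
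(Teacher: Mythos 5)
Your proposal is correct and follows exactly the route the paper intends: the paper gives no separate proof of Lemma \ref{DSIPc}, deriving it ``in view of the mentioned relation between Problems \(2_{\,\boldsymbol{\mathcal{S}}}\) and \(2_{\,\boldsymbol{\mathcal{C}}}\),'' i.e.\ by transferring Lemma \ref{DSIPs} through the Cayley transform \eqref{FrLT} via Lemma \ref{ReCaSh}, just as you do. Your extra check that \(1-B(z)\,\omega(z)\) cannot vanish in \(\mathbb{D}_+\) (since \(s=B\omega\) vanishes at the \(z_k\) and hence is not a unimodular constant) is a welcome detail that the paper leaves implicit.
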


Lemmas \ref{ReL} and \ref{DSIPc} allow to describe the set of all
measures \(\sigma\) which belongs to the set \(\Sigma(z_1,z_2,\,\ldots\,,z_n)\). Holomorphic functions
\(c_{\sigma,\beta}\) from the Caratheodory class appear in such description.
However it is more convenient to formulate the final result not in terms
of functions of the Caratheodory class but in terms of real parts such functions. This allows to eliminate the value \(\beta\) which is not
related to the original problem.

\begin{defn}
\label{DeHaCl} \emph{The Herglotz class} \(\boldsymbol{\mathcal{H}}\) is the class of all functions \(h(z)\)
which possess the properties:
\begin{enumerate}
\item
The function \(h(.)\) is defined and harmonic in the disc \(\mathbb{D}_+\).
\item
The  function \(h(.)\) is non-negative in \(\mathbb{D}\):
\begin{equation}
\label{PRPh}
h(z)\geq 0,\quad \forall z\in\mathbb{D}_+.
\end{equation}
\end{enumerate}
\end{defn}

\vspace{3.0ex}
\noindent
The following theorem characterizes functions \(h\in\boldsymbol{\mathcal{H}}\) as
functions which admit the representation
 \begin{equation}
\label{RHh}
h(z)=\int\limits_{\mathbb{T}}\frac{1-|z|^2}{|t-z|^2}\,\sigma(dt), \quad z\in\mathbb{D_+},
\end{equation}
with a non-negative measure \(\sigma\).

\vspace{3.0ex}
\noindent
\textbf{Riesz-Herglotz Representation Theorem.}
\textit{
\begin{enumerate}
\item
Let \(h\) be a function from the class \(\boldsymbol{\mathcal{H}}\).
Then the function \(h\) is representable in the form \eqref{RHh}, where
\(\sigma\) is a non-negative measure on \(\mathbb{T}\).
\item
Let \(\sigma\) be a non-negative measure on \(\mathbb{T}\) and the function
\(h\) be constructed from this \(\sigma\) according to \eqref{RHh}.
Then \(h\in\boldsymbol{\mathcal{H}}\).
\item
The correspondence \eqref{RHh} between \(h\) and \(\sigma\) is one-to-one.
\end{enumerate}
}

\vspace{3.0ex}
The following Theorem is the main result of this note.
\begin{theo}
\textit{
Let \(z_1,z_2,\,\ldots\,,,z_n\) be pairwise different points from
\(\mathbb{D}_+\) and \(B(z)\) be the Blaschke product constructed
from these points, \eqref{BP}.\\
The formula
\begin{equation}
\label{SIPh}
h(z)=\re \frac{1+B(z)\,\omega(z)}{1-B(z)\,\omega(z)}\ccomma \quad z\in
\mathbb{D}_+,
 \end{equation}
parameterises the set \(\Sigma(z_1,z_2,\,\ldots\,,z_n)\).
The function \(\omega\in\boldsymbol{\mathcal{S}}\) serves as a parameter.}
\textit{
\begin{enumerate}
\item
Let \(\omega(z)\) be a function from the Schur class \(\boldsymbol{\mathcal{S}}\) and the function
\(h\) be constructed from this \(\omega\) by the formula \eqref{SIPh}.\\
Then
 \begin{enumerate}
 \item
 The function \(h\) is a harmonic function from the Herglotz class
 \(\boldsymbol{\mathcal{H}}\).
 \item
 The measure \(\sigma\) which appears in the Riesz-Herglotz representation \eqref{RHh}
 of the function \(h\) belongs to the set \(\Sigma(z_1,z_2,\,\ldots\,,z_n)\).
 \end{enumerate}
 \item
 Let \(\sigma\) be a measure which belongs to the set \(\Sigma(z_1,z_2,\,\ldots\,,z_n)\). Let \(h\) be
 the function which is constructed from this \(\sigma\) according to
 \eqref{RHh}.\\
 Then there exists the function \(\omega(z)\) from the
 Schur class \(\boldsymbol{\mathcal{S}}\) such that the equality
 \eqref{SIPh} holds.
 \item
 Different functions \(h\) correspond to different parameters \(\omega\).
\end{enumerate}
}
\end{theo}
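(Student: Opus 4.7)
The plan is to assemble the theorem by chaining together Lemma~\ref{ReL}, Lemma~\ref{DSIPc}, the identity \eqref{RePa}, and the Riesz-Herglotz representation. The underlying diagram is: parameters $\omega\in\boldsymbol{\mathcal{S}}$ correspond bijectively to solutions $c$ of Problem~$2_{\,\boldsymbol{\mathcal{C}}}$ via \eqref{SIPc} (by Lemma~\ref{DSIPc}); each such $c$ is in turn a unique $c_{\sigma,\beta}$ by the Representation Theorem for $\boldsymbol{\mathcal{C}}$; the measure $\sigma$ so obtained lies in $\Sigma(z_1,\ldots,z_n)$ by Lemma~\ref{ReL}; and the function $h$ appearing in the statement is just $\re c$, which by \eqref{RePa} equals $h_\sigma$, so the Riesz-Herglotz theorem identifies its representing measure with $\sigma$.

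For part (1), start with $\omega\in\boldsymbol{\mathcal{S}}$ and set $c(z)=(1+B(z)\omega(z))/(1-B(z)\omega(z))$. Lemma~\ref{DSIPc} gives $c\in\boldsymbol{\mathcal{C}}$ with $c(z_k)=1$, and the Representation Theorem for $\boldsymbol{\mathcal{C}}$ writes $c=c_{\sigma,\beta}$ for unique $\sigma,\beta$. Then $h=\re c=h_\sigma$ by \eqref{RePa}, placing $h\in\boldsymbol{\mathcal{H}}$; comparing \eqref{cb} with \eqref{RHh} and invoking uniqueness in Riesz-Herglotz identifies the representing measure of $h$ as exactly this $\sigma$, which Lemma~\ref{ReL}(2) then puts into $\Sigma(z_1,\ldots,z_n)$. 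For part (2), given $\sigma\in\Sigma(z_1,\ldots,z_n)$, Lemma~\ref{ReL}(1) supplies $\beta\in\mathbb{R}$ such that $c_{\sigma,\beta}$ is a solution of Problem~$2_{\,\boldsymbol{\mathcal{C}}}$; Lemma~\ref{DSIPc}(1) then furnishes $\omega\in\boldsymbol{\mathcal{S}}$ with $c_{\sigma,\beta}=(1+B\omega)/(1-B\omega)$, and taking real parts yields \eqref{SIPh}.

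The main point requiring a small argument is the injectivity claim (3), which does not follow immediately from Lemma~\ref{DSIPc}(3) because passing to real parts is \emph{a priori} many-to-one on holomorphic functions. Suppose $\omega_1,\omega_2\in\boldsymbol{\mathcal{S}}$ yield the same $h$ through \eqref{SIPh}, and let $c_j=(1+B\omega_j)/(1-B\omega_j)\in\boldsymbol{\mathcal{C}}$. Then $c_1,c_2$ are holomorphic on $\mathbb{D}_+$ with identical real parts, hence differ by a purely imaginary constant. But $c_j(z_k)=1$ is real by Lemma~\ref{DSIPc}, so that constant is zero, giving $c_1=c_2$; Lemma~\ref{DSIPc}(3) then forces $\omega_1=\omega_2$. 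This \emph{normalisation through the interpolation nodes} is the only slightly delicate step; everything else is bookkeeping of bijections already established.
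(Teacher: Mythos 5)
Your proof is correct and follows exactly the route the paper intends: the theorem is stated there without a written-out proof, as an assembly of Lemma~\ref{ReL}, Lemma~\ref{DSIPc}, the Representation Theorem for \(\boldsymbol{\mathcal{C}}\), the identity \eqref{RePa}, and the Riesz--Herglotz theorem. Your explicit treatment of the injectivity in part (3) --- observing that two Carath\'eodory functions with equal real parts differ by an imaginary constant which the normalisation \(c(z_k)=1\) forces to vanish --- fills in the one step the paper leaves tacit, and does so correctly.
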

 The parametrization \eqref{SIPh} allows to estimate the values
 \(\sigma(\mathbb{T})\) for the measures \(\sigma\in\Sigma(z_1,z_2,\,\ldots\,,z_n)\).
\begin{lem}
\label{EsMe}
Let \(z_1,z_2,\,\ldots\,,z_n\) be pairwise different points from \(\mathbb{D}_+\). Then
\begin{subequations}
\label{esti}
\begin{align}
\max\{\sigma(\mathbb{T}):\,\sigma\in\Sigma(z_1,z_2,\,\ldots\,,z_n)\}&=
\frac{1+B(0)}{1-B(0)}\ccomma\label{Esti+}\\
\min\{\sigma(\mathbb{T}):\,\sigma\in\Sigma(z_1,z_2,\,\ldots\,,z_n)\}&=
\frac{1-B(0)}{1+B(0)}\cdot\label{Esti=}
\end{align}
\end{subequations}
\end{lem}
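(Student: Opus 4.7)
The plan is to reduce the extremal problem for $\sigma(\mathbb T)$ to a simple extremal problem for the real part of a fractional-linear expression, using the parametrization \eqref{SIPh} already established in the main Theorem.

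First, I would observe that the Riesz--Herglotz formula \eqref{RHh} evaluated at $z=0$ gives
\[
\sigma(\mathbb T)=\int_{\mathbb T}\sigma(dt)=h_\sigma(0).
\]
By the main Theorem, as $\sigma$ runs over $\Sigma(z_1,\ldots,z_n)$, the corresponding harmonic functions $h=h_\sigma$ are exactly those given by \eqref{SIPh} with $\omega\in\boldsymbol{\mathcal S}$. Thus
\[
\sigma(\mathbb T)=\re\,\frac{1+B(0)\omega(0)}{1-B(0)\omega(0)},
\]
and the task becomes: find the extrema of this quantity as $\omega$ ranges over $\boldsymbol{\mathcal S}$.

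Next I would exploit two simple facts. First, from \eqref{BP}, evaluating at $t=0$ gives $B(0)=\prod_{k=1}^n|z_k|$, so $b:=B(0)$ is a non-negative real number strictly less than $1$. Second, the values $\omega(0)$ as $\omega$ runs over $\boldsymbol{\mathcal S}$ fill the entire closed unit disc: the inclusion $|\omega(0)|\le 1$ is the defining property of $\boldsymbol{\mathcal S}$, and conversely every $w$ with $|w|\le 1$ is attained by the constant function $\omega(z)\equiv w$. Therefore the extremal problem reduces to
\[
\max/\min\Bigl\{\re\tfrac{1+bw}{1-bw}:\ |w|\le 1\Bigr\}.
\]

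The key computation is then the identity
\[
\re\,\frac{1+u}{1-u}=\frac{1-|u|^2}{|1-u|^2},
\]
applied with $u=bw$, so $|u|\le b$. Writing $u=re^{i\theta}$ with $0\le r\le b$, the denominator $1-2r\cos\theta+r^2$ is minimized at $\theta=0$ and maximized at $\theta=\pi$, producing the one-variable expressions $\frac{1+r}{1-r}$ (maximum over $\theta$) and $\frac{1-r}{1+r}$ (minimum over $\theta$), which are respectively increasing and decreasing in $r$ on $[0,b]$. Hence the overall maximum is $\frac{1+b}{1-b}$, attained at $u=b$, and the overall minimum is $\frac{1-b}{1+b}$, attained at $u=-b$; both extrema are realised by the admissible parameters $\omega(z)\equiv 1$ and $\omega(z)\equiv-1$, giving \eqref{Esti+} and \eqref{Esti=} respectively.

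I do not anticipate a serious obstacle: the only point to verify with care is that $B(0)$ is real and lies in $[0,1)$, so that the elementary real-variable optimization on the disc $|u|\le b$ is valid and the extremal constant parameters $\omega\equiv\pm 1$ fall inside $\boldsymbol{\mathcal S}$.
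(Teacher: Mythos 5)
Your proposal is correct and follows essentially the same route as the paper: evaluate the Riesz--Herglotz representation at \(z=0\) to get \(\sigma(\mathbb{T})=h(0)=\re\frac{1+B(0)\omega(0)}{1-B(0)\omega(0)}\), note that \(B(0)=\prod_k|z_k|\in[0,1)\) and that \(\omega(0)\) fills the closed unit disc, and optimize over that disc. You merely spell out the elementary optimization (via \(\re\frac{1+u}{1-u}=\frac{1-|u|^2}{|1-u|^2}\)) that the paper leaves implicit.
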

\begin{proof}
Let \(\sigma\) be the measure which represents the function \(h\) from \eqref{SIPh}. Then
\begin{equation}
\label{EV}
\sigma(\mathbb{T})=h(0)=\re \frac{1+B(0)\omega(0)}{1-B(0)\omega(0)}\cdot
\end{equation}
 Since
\(B(0)=\prod\limits_{1\leq k\leq n}|z_k|,\) the inequalities
\begin{equation}
\label{PoB}
0\leq B(0)<1
\end{equation}
 hold.
  When the parameter \(\omega\) runs over the whole Schur class
\(\boldsymbol{\mathcal{S}}\), the value \(\omega(0)\) runs over the \emph{closed} unit disc
\(\overline{\mathbb{D}}_+=\mathbb{D}_+\cup\mathbb{T}\).
Therefore the value in the right hand side of \eqref{EV} attained its
maximum for \(\omega(0)=+1\) and its minimum for \(\omega(0)=-1\).
The maximum corresponds to the choice of the parameters \(\omega(z)\equiv+1\), the minimum corresponds to the choice of the parameters \(\omega(z)\equiv~-1\).
\end{proof}

\vspace{1.0ex}
\noindent
\textbf{Remark.} The space \(\mathscr{L}(z_1,z_2,\,\ldots\,,z_n)\) can be identified with the subspace \(H^2_-\ominus B^{-1}H^2_-\) of the Hilbert space \(L^2_m\), where
\(H^2_-\) is the Hardy class in the domain \(\mathbb{D}_-\) and \(\ominus\)
is the orthogonal complement with respect to the scalar product in \(L^2_m\).


\begin{thebibliography}{1111}
\bibitem[Kov]{Kov} Kovalishina, I.V. \textit{Analytic Theory of a class of interpolation problem.}\\
     Mathematics of the USSR-Izvestiya, 1984,
    \textbf{22}:3, 419-463.
\end{thebibliography}
\end{document}